\documentclass[12pt]{article}
\usepackage{amsfonts,amsmath,color,amsthm,amssymb}
\usepackage{mathtools}
\usepackage{tikz}
\usepackage{pstricks}
\usepackage{graphicx}
\usepackage{epstopdf}
\usepackage{permute}
\usepackage{latexsym}
\usepackage{mathrsfs}
\usepackage{hyperref}
\usepackage[utf8]{inputenc}
\usepackage{lscape}
\usepackage{longtable}
\usepackage{algorithm2e} 
\usepackage{enumitem} 
\usepackage{array}
\usepackage{tikz}

\usepackage{tikz-qtree,tikz-qtree-compat}
\usetikzlibrary{positioning,decorations.pathreplacing}

\title{The Expected Number of Distinct Substrings in an Alphabet String}

\author{Anant Godbole\\
East Tennessee State University and High Point University}

\newtheorem{thm}{Theorem}[section]
\newtheorem{lem}[thm]{Lemma}

\def\p{\mathbb P}
\def\e{\mathbb E}
\def\l{\lambda}
\def\lr{\left(}
\def\lg{{\rm lg\ }}
\def\rr{\right)}
\def\lc{\left\{}

\def\rc{\right\}}
\def\P{{\rm Po}}
\def\cl{{\mathcal L}}
\def\n{\noindent}
\def\nn{\nonumber}
\def\be{\begin{equation}}
\def\ee{\end{equation}}
\def\tv{d_{TV}}

\newcommand{\beq}{\begin{eqnarray}}
\newcommand{\eeq}{\end{eqnarray}}
\date{}
\begin{document}
\maketitle
\begin{abstract}
Consider a sequence of i.i.d.~trials $X=\{X_1, X_2, \ldots, X_n\}$ where $\p(X_i=j)={1}/{d}; j=1,2,\ldots, d$, or more generally $\p(X_i=j)=p_j; \sum_{1\le j\le d}p_j=1$.  We consider the variable $D$ that counts the number of distinct substrings of all lengths,$1\le k\le n$ in $X$ and prove results concerning $\e(D)$.  
\end{abstract}
\section{Introduction}   

\indent\indent Given a sequence of i.i.d.~trials $X=\{X_1, X_2, \ldots, X_n\}$ where $\p(X_i=j)={1}/{d}; j=1,2,\ldots, d$, or more generally $\p(X_i=j)=p_j; \sum_{1\le j\le d}p_j=1$, we consider the variable $D=D_n$ that counts the number of distinct substrings of all lengths $1\le k\le n$, in $X$ and prove results concerning $\e(D)$.    Of special interest, studied in Section 2, is the case where $d=2$ and $p_1=p_2=\frac{1}{2}$.  The general case (with equal $p_j$s) is studied in Section 3.

For example with the sequence
 
$$THTTHTT$$
 
\n we have the distinct (consecutive) strings being
$$T, H, TH, HT, TT, THT, HTT, TTH, THTT, HTTH, TTHT, THTT, THTTH, HTTHT,$$

$$ TTHTT, THTTHT, HTTHTT,\ {\rm and}\ THTTHTT,$$ 

\n for a total of $D=18$ (by convention we do not count the empty string $\emptyset$).  $D_n$ warrants study due to its interesting property that it is a measure of the intricacy of the sequence, and its first moment is a good place to begin such a study.  It is clear that the minimum value of $D$ is $n$, as exemplified by the string $11\ldots 1$, but it is not clear, at least to this author, what the maximum value of $D=\sum_{k=1}^nD_k$ is, though we see below that an upper bound on $D_k$ is $\min\lc(n-k+1),2^k\rc$. 

To put this paper in context, let us state that we are doing for words what was done in \cite{9pp} and \cite{swick} for permutations:  Improving on the work in \cite{9pp}, the following bound was proved in \cite{swick}:

\begin{thm} For sufficiently large $n$,
\[\e(D_n)\ge
\frac{n^2}{2}\lr1-\frac{17\ln n}{n}\rr.
\]
\end{thm}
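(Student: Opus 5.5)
We first note that $D_n$ here is the permutation statistic of \cite{9pp,swick}: $X$ is a uniformly random permutation of $[n]$, and $D_n=\sum_k D_k$ counts the distinct consecutive patterns, i.e.\ order-isomorphism classes of windows $W_i^{(k)}=(X_i,\dots,X_{i+k-1})$. The approach is a first-moment deletion argument. There are $n-k+1$ windows of length $k$, and every window that does not repeat an earlier pattern contributes a new pattern. Hence
\[
D_k\ \ge\ (n-k+1)-R_k,\qquad R_k=\#\{i<j:\ W_i^{(k)}\sim W_j^{(k)}\},
\]
where $\sim$ denotes order-isomorphism. Since $\sum_{k=1}^n(n-k+1)=n(n+1)/2\ge n^2/2$, it suffices to show that for some cutoff $K$ we have
\[
\sum_{k<K}(n-k+1)+\sum_{k\ge K}\e(R_k)\ \le\ 17\,\frac{n\ln n}{2}.
\]
For $k<K$ we simply bound $D_k\ge 0$, which loses at most $nK$. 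So the plan is to choose $K\approx c\ln n$ (or even $c\ln n/\ln\ln n$) and show that the tail $\sum_{k\ge K}\e(R_k)$ is $o(n)$.

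Next we bound $\p(W_i\sim W_j)$ for a shift $s=j-i$. If $s\ge k$, the windows are disjoint, so their patterns are independent and uniform, and the probability is exactly $1/k!$. If $s<k$, we use two bounds and take the minimum.

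\emph{Bound (a): fresh elements.} Condition on all values except the $s$ fresh positions $i+k,\dots,i+k+s-1$ of $W_j$. The relative order of the $k-s$ overlap elements is then fixed, and the pattern equality prescribes exactly how the $s$ fresh values interleave with them and with each other. By exchangeability, a prescribed relative order of $s$ new elements inserted among $k-s$ old ones has probability $(k-s)!/k!$.

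\emph{Bound (b): overlap elements.} Condition instead on the pattern of $W_i$. Equality forces the pattern of positions $i+s,\dots,i+k-1$ to equal the pattern of positions $i,\dots,i+k-s-1$.
\begin{itemize}
\item When $s\ge k/2$, these two index sets are disjoint, and the probability is $1/(k-s)!$.
\item When $s<k/2$, we use the periodic structure instead. Equality implies $X_t<X_u\iff X_{t+s}<X_{u+s}$, so each residue class mod $s$ of length $\ge 3$ must be monotone. This gives a bound far smaller than $1/(k-s)!$; it is crude but amply sufficient.
\end{itemize}
In all cases we obtain
\[
\p(W_i\sim W_j)\ \le\ \min\lc\frac{1}{(k-s)!},\frac{(k-s)!}{k!}\rc\ \le\ \frac{1}{\sqrt{k!}} .
\]

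Summing over pairs gives
\[
\e(R_k)\ \le\ \frac{n^2}{k!}+\frac{nk}{\sqrt{k!}} .
\]
With $K=\lceil 3\ln n\rceil$, say, we have $k!\ge n^{3}$ for all $k\ge K$ and large $n$, by Stirling. Hence the tail $\sum_{k\ge K}\e(R_k)$ is $O(1)$, and the total loss is at most $nK+O(1)\le 17\,n\ln n/2$ for large $n$. Here the generous constant $17$ absorbs the slack.

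The main obstacle I expect is the overlapping case $s<k/2$ in bound (b). There the two sets of overlap positions intersect, the conditioning is not a clean independence, and one has to make the residue-class monotonicity argument (or a comparable argument) rigorous in order to obtain a bound uniform in $s$. The fallback is that bound (a), $(k-s)!/k!\le 1/\binom{k}{s}\le 1/\binom{k}{\lfloor k/2\rfloor}$ for $s\ge k/2$, together with the chain bound for small $s$, already gives a summable tail once $k\ge C\ln n$. So only the constant, not the method, is at stake.
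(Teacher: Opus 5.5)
Your deletion framework is sound and takes a genuinely different route from the one behind this result. The paper only quotes the theorem; its own analogue, Theorem 2.3, uses Stein--Chen Poisson approximation pattern by pattern and discards strongly self-overlapping patterns via $A(k)$. In your route the inequality $D_k\ge (n-k+1)-R_k$ is deterministic, the cutoff costs at most $nK$, and everything reduces to bounding $\p(W_i\sim W_j)$ for overlapping windows. That one estimate, however, is not proved as written, and this is a real gap.

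\textbf{Bound (a).} The argument given for it is invalid. The interleaving that $W_i\sim W_j$ prescribes for the fresh values depends on the very values you condition on, so exchangeability does not give $(k-s)!/k!$. Already for $k=2$, $s=1$: given $X_i<X_{i+1}$, the conditional probability that $X_{i+1}<X_{i+2}$ is $1/3$, not $1/2$. If you literally condition on all values of the permutation outside the fresh block, the set of fresh values is fixed and only their internal order is random. What that conditioning actually proves is $\p(W_i\sim W_j)\le 1/s!$. This matters because for $s$ close to $k$, bound (b) gives only $1/(k-s)!$, which is useless, so those shifts are left uncontrolled.

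\textbf{The case $s<k/2$.} Here the residue-class argument yields $\prod_c 2/\ell_c!$. For $s$ proportional to $k$ this is only exponentially small; for $s=k/3$ it equals $12^{-k/3}$. It is not ``far smaller than $1/(k-s)!$''. For large $k$, neither it nor $(k-s)!/k!\ge k^{-k/3}$ is $\le 1/\sqrt{k!}$. So $\e(R_k)\le n^2/k!+nk/\sqrt{k!}$ does not follow. Your fallback also reverses an inequality: for $s\ge k/2$ one has $1/\binom{k}{s}\ge 1/\binom{k}{\lfloor k/2\rfloor}$.

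\textbf{Repair.} Either of the following fixes works.
\begin{itemize}
\item Treat the two ranges separately. For $s\ge k/2$, use $1/s!\le 1/\lceil k/2\rceil!$. For $s<k/2$, every residue class has length $\ell\ge 3$, and $2/\ell!\le 3^{-(\ell-2)}$, so your monotonicity bound is at most $3^{-(k-s)}\le 3^{-k/2}$. The overlapping pairs then contribute $O(nk\,3^{-k/2})$ to $\e(R_k)$, and the sum over $k\ge K=\lceil 3\ln n\rceil$ is $o(1)$.
\item Use a counting bound. On the event $W_i\sim W_j$, the map $X_t\mapsto X_{t+s}$ is the unique increasing bijection between the value sets of the two windows. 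So the relative order of the $k+s$ values is determined by which values occupy the first $s$ and the last $s$ positions, together with the order of the first $s$. Hence $\p(W_i\sim W_j)\le \binom{k+s}{s}\binom{k}{s}s!/(k+s)!=\binom{k}{s}/k!\le 2^k/k!$ for every $1\le s\le k$, giving $\e(R_k)\le n^2/k!+nk\,2^k/k!$.
\end{itemize}

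With either fix your argument proves the theorem, indeed with $6$ in place of $17$. It is also more elementary than the Poisson-approximation route, since it needs only pair-coincidence probabilities and no classification of patterns by self-overlap.
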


Why is the lower bound in Theorem 1.1 relevant?  This is because in the context of permutations, 
\beq D=\sum_{k=1}^nD_k&\le&\sum_k\min\lc(n-k+1),k!\rc\nn\\&\le&\sum_k(n-k+1)\nn\\
&=&\sum_{k=1}^{n} k\nn\\&=& \frac{n^2}{2}(1+o(1))\eeq

As noted by Professor Koutras, moreover, the dependence structure in the case of words (due to repeated symbols) is much richer than for permutations.

For results when we look at embedded (non-consecutive) subsequences rather than consecutive strings, much more is known. For example, the string 10110 contains the subsequences 0, 1, 01, 10, 11, 00, 100, 101, 110, 111, 011, 010, 1011, 1010, 1110, 0110, and 10110.
What is the average case behavior?  In \cite{bgk}, it was proved that
\begin{thm}  Let $s_1,s_2,\ldots $ be a sequence of independent and identically distributed random variables with $\p(s_1=j)=\alpha_j, j=1,2,\ldots,d, \sum_j\alpha_j=1$.  Set $\alpha=(\alpha_1,\ldots,\alpha_d)$.  Let $\phi(S_n)$ be the number of distinct subsequences in $S_n=(s_1,\ldots,s_n)$.  Let $\psi(n)=E(\phi(S_n))$.  Then there exists $c=c_{d,\alpha}\ge 1$ such that
\[\psi(n)^{1/n}\to c\ {\rm as}\ n\to\infty,\] where $c=1$ iff $d\ge 1$ and $\max_j \alpha_j=1$.
\end{thm}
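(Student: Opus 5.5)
The plan is to show that $\log\psi(n)$ is superadditive (up to a harmless constant), invoke Fekete's lemma to get the limit $c=\lim\psi(n)^{1/n}$, and then decide separately when $c=1$.

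\emph{Superadditivity.} Write $S_{m+n}$ as the concatenation of $A=(s_1,\ldots,s_m)$ and $B=(s_{m+1},\ldots,s_{m+n})$. These blocks are independent, with $A\stackrel{d}{=}S_m$ and $B\stackrel{d}{=}S_n$. The natural first attempt is the map $(u,v)\mapsto uv$ from (subsequences of $A$)$\times$(subsequences of $B$), but it is not injective, since different splits can give the same word. To repair this I will use the standard canonical decomposition: for each distinct subsequence $w$ of $S_{m+n}$, take the greedy leftmost embedding and split $w$ into the longest prefix embeddable in $A$ and the remaining suffix. The remaining suffix is then a subsequence of $B$. This shows $\phi(S_{m+n})\le\phi(A)\phi(B)$, which yields submultiplicativity of $\psi$, and that is the wrong direction for the result I want.

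For the lower bound I instead fix the last symbol. Let $\phi_j(\cdot)$ count the distinct subsequences ending in symbol $j$, with the empty word allowed among the $u$'s. For distinct $u$ ending in $j$ (or empty) and distinct $v$ not containing... this route is messy. The cleaner choice is to drop exact superadditivity and use a padding argument. If $A$ contains every symbol, then concatenations $u\,x\,v$ with a fixed separator symbol $x$ force injectivity up to a factor polynomial in $m+n$. Concretely, I would aim for
\[
\psi(m+n+1)\ge \tfrac{1}{\mathrm{poly}}\,\psi(m)\psi(n),
\]
and then apply the generalized Fekete lemma with error term $o(n)$ in the exponent. Together with the trivial bounds $1\le\psi(n)\le 2^n$, this gives existence of $c\in[1,2]$.

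\emph{Characterizing $c=1$.} If $\max_j\alpha_j=1$, then $S_n$ is a.s.\ constant, so $\phi=n+1$ (or $n$) and $c=1$. Conversely, if two symbols $a\ne b$ both have positive probability, partition $S_n$ into $n/2$ blocks of length $2$. Each block equals $ab$ with probability $q=\alpha_a\alpha_b>0$. Whenever there are $K$ such blocks, choosing one symbol from each gives $2^K$ distinct words, since the choices are distinguishable by length-$K$ words over $\{a,b\}$ read off those blocks. Hence
\[
\psi(n)\ge \e(2^K)=(1+q)^{n/2},
\]
so $c\ge\sqrt{1+q}>1$. This direction is easy.

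The main obstacle is the injectivity needed for near-superadditivity. The decomposition of a word into a part from $A$ and a part from $B$ is not unique, and I expect to have to control the overcount by at most $m+n+1$ split points. This gives $\phi(S_{m+n})\ge \phi(A)\phi(B)/(m+n+1)$, because each word arises from at most $m+n+1$ pairs once we note that the split point determines the pair. Fekete's lemma with a logarithmic defect then finishes the proof.
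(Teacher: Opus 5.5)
The paper does not prove this statement; it quotes it from \cite{bgk} as background, so your proposal has to be judged on its own. Once pruned, your argument is correct, but you talk yourself out of the simplest proof along the way.

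Your first inequality is $\phi(AB)\le\phi(A)\phi(B)$. It is valid when the empty subsequence is counted; otherwise apply it to $\phi+1$, which does not change $n$-th roots since $\psi(n)\ge 1$. Taking expectations and using the independence of $A$ and $B$ gives $\psi(m+n)\le\psi(m)\psi(n)$. That is not ``the wrong direction.'' Fekete's lemma applies to subadditive sequences exactly as it does to superadditive ones. Hence $\log\psi(n)/n$ converges to $\inf_n\log\psi(n)/n$, and the bounds $1\le\psi(n)\le 2^n$ place $c=\inf_n\psi(n)^{1/n}$ in $[1,2]$. That finishes the existence part.

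The separator/padding idea in your second paragraph is never carried out and should be deleted. Your closing fiber count is also correct: a word $w$ has at most $|w|+1$ factorizations $w=uv$, so $\phi(AB)\ge\phi(A)\phi(B)/(m+n+1)$. Combined with the de Bruijn--Erd\H{o}s version of Fekete's lemma (defect $\log(n+1)$, and $\sum_n\log(n+1)/n^2<\infty$), this gives the limit as well. So you have two valid routes, and the shorter one is the one you discarded. Keeping both does buy something: the submultiplicative inequality gives $c\le\psi(n)^{1/n}$ for every $n$, while the near-supermultiplicative one gives lower bounds on $c$ computable from a single value $\psi(n)$.

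Your characterization of $c=1$ is correct. In the degenerate case, $S_n$ is a.s.\ a constant string, so $\phi(S_n)=n+1$ (counting the empty word). Otherwise, take $a\ne b$ of positive probability and set $q=\alpha_a\alpha_b$. Let $K\sim{\rm Bin}(\lfloor n/2\rfloor,q)$ count the disjoint blocks $(s_{2i-1},s_{2i})=(a,b)$. Every word in $\{a,b\}^K$ is a subsequence of $(ab)^K$, hence of $S_n$. So $\psi(n)\ge\e(2^K)=(1+q)^{\lfloor n/2\rfloor}$ and $c\ge\sqrt{1+q}>1$. Write $\lfloor n/2\rfloor$ rather than $n/2$.
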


More exact results were proved for $d=2$, also in \cite{bgk}:
\begin{thm}
Suppose $\p[s_i=1]=\alpha \in [0,1]$ for all $1 \le i \le n$, and $\p[s_i=0]=1-\alpha$, $\alpha\ne 0,1$. Then we have
$$\psi(S_n)=\frac{A+B}{{2{\sqrt{\alpha(1-\alpha)}}}},$$
where  
$$A=A_n=\big(1-2\sqrt{\alpha(1-\alpha)}\big)\big(1-\big(1-\sqrt{\alpha(1-\alpha)}\big)^n\big)$$ and
$$B=B_n=\big(1+2\sqrt{\alpha(1-\alpha)}\big)\big(\big(1+\sqrt{\alpha(1-\alpha)}\big)^n-1\big).$$

\end{thm}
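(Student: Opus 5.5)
We prove the formula in Theorem 1.3 by deriving a linear recurrence for $\e$ of the number of distinct subsequences, solving it with a generating function, and reading off the closed form from the two roots of the characteristic polynomial. Write $\beta=1-\alpha$ and $r=\sqrt{\alpha\beta}$. Let $f_n$ be the number of distinct subsequences of $S_n$ \emph{including} the empty one, so that $\psi(n)=\e(f_n)-1$; set $f_0=1$.

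\textbf{Step 1 (deterministic recursion).} Every distinct subsequence of $S_n$ is either a distinct subsequence of $S_{n-1}$, or one of these followed by $s_n$. Two such extensions coincide exactly when they are already counted by extending through the previous occurrence of the symbol $s_n$. This gives
\[f_n=2f_{n-1}-f_{j-1},\]
where $j<n$ is the last index with $s_j=s_n$. If no such $j$ exists, the subtracted term is $0$.

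\textbf{Step 2 (taking expectations).} Split according to the value of $j$. The event $\{s_j=s_n,\ s_{j+1},\ldots,s_{n-1}\ \text{all equal the other symbol}\}$ depends only on positions $j,\ldots,n$. The quantity $f_{j-1}$ depends only on $s_1,\ldots,s_{j-1}$, so the two are independent. Writing $a_n=\e(f_n)$, this yields the convolution recurrence
\[a_n=2a_{n-1}-\sum_{j=1}^{n-1}a_{j-1}\bigl(\alpha^2\beta^{\,n-1-j}+\beta^2\alpha^{\,n-1-j}\bigr),\qquad n\ge 1,\]
with the empty sum when $n=1$, so that $a_1=2$.

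\textbf{Step 3 (generating function).} Let $F(x)=\sum_{n\ge0}a_nx^n$. The geometric kernels have generating functions $\alpha^2x^2/(1-\beta x)$ and $\beta^2x^2/(1-\alpha x)$. We therefore obtain
\[F(x)\Bigl[1-2x+\tfrac{\alpha^2x^2}{1-\beta x}+\tfrac{\beta^2x^2}{1-\alpha x}\Bigr]=1+(\text{correction from } a_1).\]
Clearing the denominator $(1-\alpha x)(1-\beta x)=1-x+r^2x^2$ makes $F$ rational. The key algebraic check is that, after using $\alpha+\beta=1$ and $\alpha^2+\beta^2=1-2r^2$, the relevant denominator factors as
\[\bigl(1-(1+r)x\bigr)\bigl(1-(1-r)x\bigr)=1-2x+(1-r^2)x^2,\]
possibly times a factor that cancels against the numerator.

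\textbf{Step 4 (extraction).} A partial-fraction decomposition gives
\[a_n=c_+(1+r)^n+c_-(1-r)^n+c_0.\]
The constants are fixed by $a_0=1$, $a_1=2$ and $a_2=4-(\alpha^2+\beta^2)$. Subtracting $1$ for the empty subsequence should reproduce $(A_n+B_n)/(2r)$. As a consistency check, $n=1$ gives $(r(1-2r)+r(1+2r))/(2r)=1$, as required.

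The main obstacle is Step 3: organising the algebra so that the factorisation $\bigl(1-(1+r)x\bigr)\bigl(1-(1-r)x\bigr)$ emerges cleanly, and any spurious factor cancels. If the direct computation is messy, I would instead guess the ansatz of Step 4, fit the constants from $a_0,a_1,a_2$, and verify by induction that it satisfies the recurrence of Step 2.
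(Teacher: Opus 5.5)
The paper gives no proof of this statement. It quotes the result from \cite{bgk} as background, so there is no internal argument to compare yours with. Judged on its own, your route is correct and yields a complete, self-contained proof: the last-occurrence recursion $f_n=2f_{n-1}-f_{j-1}$, then conditioning on the position $j$ of the previous copy of $s_n$ (independent of $f_{j-1}$), then a generating function. The algebra you flag as the main obstacle is short.

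Your recurrence already holds at $n=1$ (empty sum), so the right-hand side in Step 3 is exactly $1$, with no correction. Multiply through by $(1-\alpha x)(1-\beta x)=1-x+r^2x^2$ and use $\alpha^2+\beta^2=1-2r^2$ and $\alpha^3+\beta^3=1-3r^2$. This gives
\[
F(x)=\frac{1-x+r^2x^2}{1-3x+(3-r^2)x^2-(1-r^2)x^3}=\frac{1-x+r^2x^2}{(1-x)\bigl(1-(1+r)x\bigr)\bigl(1-(1-r)x\bigr)}.
\]
The extra factor $1-x$ does \emph{not} cancel, since the numerator equals $r^2\neq 0$ at $x=1$. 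It is exactly the source of your constant $c_0$. So keep the Step 4 ansatz, but drop the hedge about cancellation.

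Equivalently, $a_n=3a_{n-1}-(3-r^2)a_{n-2}+(1-r^2)a_{n-3}$ for $n\ge 3$. This is why fitting $a_0,a_1,a_2$ suffices, and it also makes your induction fallback unnecessary. Partial fractions give $c_0=-1$, $c_+=(1+2r)/(2r)$ and $c_-=-(1-2r)/(2r)$, hence
\[
a_n-1=\frac{(1+2r)\bigl((1+r)^n-1\bigr)+(1-2r)\bigl(1-(1-r)^n\bigr)}{2r}=\frac{A_n+B_n}{2r}.
\]
This agrees with your values $a_0=1$, $a_1=2$, $a_2=3+2r^2$. At $\alpha=1/2$ it gives $a_n=2(3/2)^n-1$, matching the Collins formula quoted in the paper's Remark, which counts the empty subsequence. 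The hypothesis $\alpha\neq 0,1$ enters exactly by making $r>0$, so the three poles are distinct and the division by $2r$ is legitimate.

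One wording fix in Step 1: you are not subtracting coincidences between two extensions. You are subtracting those extensions $ws_n$ that were already subsequences of $S_{n-1}$. By the greedy, last-occurrence embedding these are exactly the $w$ that are subsequences of $S_{j-1}$, which is why the correction term is $f_{j-1}$.
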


\n {\it Remark}: Theorem 1.3 was previously known for $\alpha=0.5$; It was shown in \cite{f}  by Flaxman et al.  (2004) that when Pr$[s_i=1]=.5$ then $\e[\phi(S_n)] \sim k({3}/{2})^n$ for a constant $k$. Later, Collins  \cite{c} improved this result by finding that $\e[\phi(S_n)]=2(3/2)^n-1$.  In \cite{bgk}, this result was generalized via Theorem 1.3 to non-uniform letter generation.   Two-state Markov chains were also considered.  (Interestingly, the maximum value of $\phi(S_n)$ was shown in \cite{f} to be $\gamma^n$, where $\gamma=1.618\ldots$ is the Golden Ratio).

Finally, we mention that a lot is known about tangentially related quantities such as suffix trees, tries (also called prefix trees) and distinct substrings  -- all from an algorithmic perspective

\section {Poisson Approximation and Consecutive Patterns:  The binary case}  

\begin{lem} The number of $D_k$ distinct consecutive strings of length $k$ satisfies
$D_k\le \min\{(n-k+1), 2^k\}$.
\end{lem}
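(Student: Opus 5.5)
The bound is the minimum of two separate upper bounds, so I will prove each one on its own and then take the smaller. Throughout, $D_k$ is the number of distinct strings among the $n-k+1$ consecutive windows $W_i=(X_i,X_{i+1},\ldots,X_{i+k-1})$, $1\le i\le n-k+1$, of the binary sequence $X$. I will write $W_i$ for the window beginning at position $i$, and $S_k$ for the set of distinct strings of length $k$ appearing in $X$, so that $D_k=|S_k|$.

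\emph{First bound: $D_k\le n-k+1$.} Define the map $\{1,\ldots,n-k+1\}\to S_k$ by $i\mapsto W_i$. Every element of $S_k$ occurs at some starting position, so this map is surjective by definition of $S_k$. The cardinality of the image of a finite set is at most the cardinality of the domain, which gives $D_k\le n-k+1$. Equality holds exactly when all windows of length $k$ are pairwise different.

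\emph{Second bound: $D_k\le 2^k$.} Every element of $S_k$ is a word of length $k$ over the alphabet $\{1,2\}$, so $S_k\subseteq\{1,2\}^k$. Since $|\{1,2\}^k|=2^k$, we get $D_k\le 2^k$. The same argument over a $d$-letter alphabet gives $D_k\le d^k$, which will be needed in Section 3.

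Combining the two inequalities gives $D_k\le\min\{(n-k+1),2^k\}$. The bound is deterministic: it holds for every realization of $X$ and does not depend on the $p_j$. There is no real obstacle here. The only care needed is to count windows by starting position, so that the count $n-k+1$ is exact, and to note that the empty string is excluded by convention, so that $1\le k\le n$ and both quantities are positive. Summing over $k$ then yields the bound on $D=\sum_k D_k$ quoted in the introduction.
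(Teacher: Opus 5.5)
Your proof is correct and follows the same reasoning as the paper: the $2^k$ bound comes from the distinct length-$k$ windows forming a subset of $\{1,2\}^k$, and the $n-k+1$ bound comes from there being only $n-k+1$ starting positions for a window of length $k$. Your surjection from starting positions onto $S_k$ just spells out what the paper states informally as there not being ``enough room'' for all $2^k$ strings when $k$ is large.
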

\begin{proof} The number of distinct strings of length $k$ cannot be more than the total number of binary length $k$ strings, i.e., $2^k$.  However if $k$ is large, there might not be ``enough room" to accommodate all these stings.  Note that $n=2^k+k-1$ roughly when $k=\lg n$, where $\lg=\log_2$.  This proves Lemma 2.1.
\end{proof}
\begin{lem}
\[D\le \frac{n^2}{2}(1+o(1)).\]
\end{lem}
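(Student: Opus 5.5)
We deduce the bound deterministically from Lemma 2.1, as was done for permutations in the display following Theorem 1.1. First write $D=\sum_{k=1}^n D_k$, since every nonempty substring has a unique length $k\in\{1,\ldots,n\}$. Then apply Lemma 2.1 term by term:
\[
D=\sum_{k=1}^n D_k\le \sum_{k=1}^n \min\lc (n-k+1),2^k\rc \le \sum_{k=1}^n (n-k+1)=\sum_{j=1}^{n} j=\frac{n(n+1)}{2}.
\]
Finally, $\frac{n(n+1)}{2}=\frac{n^2}{2}\lr 1+\frac1n\rr=\frac{n^2}{2}(1+o(1))$, which is the statement. This holds for every realization of $X$, and hence also for $\e(D)$.

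We could sharpen the bound by splitting the sum at $k_0=\lfloor\lg n\rfloor$. The short lengths contribute at most $\sum_{k\le k_0}2^k\le 2^{k_0+1}\le 2n$, and the long lengths contribute at most $\sum_{k>k_0}(n-k+1)$. The total is therefore $\frac{n^2}{2}-O(n\lg n)$, which makes explicit that the loss relative to $n^2/2$ comes only from the first $\lg n$ lengths. This refinement is not needed for the lemma as stated.

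There is no real obstacle here. The only point needing care is that Lemma 2.1's bound $n-k+1$ is simply the number of starting positions for a length-$k$ window, so $D_k\le n-k+1$ holds trivially and needs no condition on $k$ relative to $\lg n$. The $o(1)$ term is then just $1/n$.
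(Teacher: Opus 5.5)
Your proof is correct and is exactly the paper's argument: the paper proves this lemma by repeating the chain in (1) with $k!$ replaced by $2^k$, namely $D=\sum_k D_k\le\sum_k\min\{n-k+1,2^k\}\le\sum_k(n-k+1)=\frac{n(n+1)}{2}=\frac{n^2}{2}(1+o(1))$. Your refinement splitting at $\lfloor \log_2 n\rfloor$ is a valid extra, giving $\frac{n^2}{2}-n\log_2 n+O(n)$, but as you note it is not needed for the lemma.
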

\begin{proof} Exactly the same as the proof of (1), with $k!$ being replaced by $2^k$.
\end{proof}
\n The main result of this paper is the following
\begin{thm}
The expected number $\e(D)=\e(D_n)$ of distinct consecutive words in $n$-long binary uniform letter generation satisfies:
\[\e(D)\ge \frac{n^2}{2}\lc1-\frac{6\lg n}{n}\rc= \frac{n^2}{2}\lc1-\frac{8.66\ln n}{n}\rc. \]
\end{thm}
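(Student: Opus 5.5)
We bound each $\e(D_k)$ from below separately and then sum over $k$. For a fixed length $k$, each of the $n-k+1$ windows $W_i=(X_i,\ldots,X_{i+k-1})$, $1\le i\le n-k+1$, contributes a new distinct string unless it repeats an earlier window. Hence
\[
D_k\ \ge\ (n-k+1)-\sum_{1\le i<j\le n-k+1} I\{W_i=W_j\},
\]
and, taking expectations,
\[
\e(D_k)\ \ge\ (n-k+1)-\sum_{i<j}\p(W_i=W_j).
\]
The whole argument rests on the estimate $\p(W_i=W_j)=2^{-k}$ for every $i<j$. When the windows are disjoint ($j-i\ge k$) this is immediate from independence. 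When they overlap, with shift $s=j-i<k$, the event $W_i=W_j$ says that $X_{t}=X_{t+s}$ for $i\le t\le i+k-1$. This forces the block $X_i\ldots X_{j+k-1}$ to be $s$-periodic, so it is determined by its first $s$ letters. The block has length $k+s$, which leaves exactly $k$ free constraints, and therefore the probability is again exactly $2^{-k}$. This is the step that needs care, because of the dependence created by overlaps, but the periodicity observation disposes of it cleanly.

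With this estimate, $\e(D_k)\ge (n-k+1)-\binom{n-k+1}{2}2^{-k}\ge (n-k+1)-n^2 2^{-k-1}$. Fix the cutoff $k_0=\lceil 2\lg n\rceil$. For $k\ge k_0$ we have $n^22^{-k-1}\le 2^{-(k-k_0)-1}$, so the total loss over all $k\ge k_0$ is at most $1$. For $k<k_0$ the union bound is useless, so we drop those terms entirely, using only $D_k\ge 0$. This gives
\[
\e(D)\ \ge\ \sum_{k=k_0}^{n}(n-k+1)-1\ =\ \binom{n-k_0+2}{2}-1\ \ge\ \frac{(n-k_0)^2}{2}\ \ge\ \frac{n^2}{2}\Bigl(1-\frac{2k_0}{n}\Bigr).
\]
Since $2k_0\le 4\lg n+2\le 6\lg n$ for $n\ge 2$, this yields
\[
\e(D)\ \ge\ \frac{n^2}{2}\Bigl(1-\frac{6\lg n}{n}\Bigr),
\]
which is the bound of Theorem 2.3. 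The equivalent form follows from $\lg n=\ln n/\ln 2$ and $6/\ln 2\approx 8.66$.

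The sharper Poisson heuristic suggested by the section title, namely $\e(D_k)\approx 2^k(1-e^{-(n-k+1)/2^k})$, is not needed at this constant. If the bookkeeping at $k_0$ came out tight, a slightly larger cutoff such as $k_0=2\lg n+1$ would still fit comfortably within $6\lg n$.
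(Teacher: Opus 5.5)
Your proof is correct, and it takes a genuinely different and more elementary route than the paper.

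The paper works word by word. It first restricts to the $A(k)\ge 2^k(1-C2^{-k/2})$ words of length $k$ whose self-overlaps have length at most $k/2$, which requires the count $|B_r|\le 2^{k-r}$ of Lemma 2.4. It then applies the Stein--Chen bound of Barbour, Holst and Janson to show that each such word's occurrence count $U_{k,j}$ is close in total variation to $\P(\l)$, with $\l=(n-k+1)2^{-k}$, and deduces $\p(U_{k,j}\ge 1)\ge 1-e^{-\l}-\varepsilon_{n,k}$. Finally it estimates the overlap terms $\sum_j\sum_i\p(I_jI_i=1)$ separately and sums over $k\ge 3\lg n$.

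You work window by window instead. The bound $D_k\ge (n-k+1)-\sum_{i<j}I\{W_i=W_j\}$, together with the exact identity $\p(W_i=W_j)=2^{-k}$, treats all words at once. Your periodicity argument for that identity is right: the $k$ constraints leave exactly the first $s$ of the $k+s$ letters free. So you need neither a classification of words by self-overlap nor any Poisson approximation.

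What your route buys:
\begin{itemize}
\item Every error term is explicit, so the inequality is proved for every $n$. The paper's chain carries unspecified constants $C$ and $o(1)$ terms, so it is really a large-$n$ statement.
\item The result is sharper: you actually obtain $\e(D)\ge\frac{n^2}{2}\bigl(1-\frac{4\lg n+2}{n}\bigr)$.
\item The same argument with $d^{-k}$ in place of $2^{-k}$ handles Section 3.
\item The periodicity argument gives $\p(W_i=W_j)\le(\max_j p_j)^k$ for non-uniform letters. This is a case the abstract announces but the paper never treats.
\end{itemize}

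What the Poisson route offers in principle is the approximation $\e(D_k)\approx 2^k(1-e^{-\l})$ for $k$ between roughly $\lg n$ and $2\lg n$, where your union bound is vacuous. That would matter for second-order terms, but the paper never uses it, since it also discards every $k$ below its cutoff.

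The only bookkeeping wrinkle in your write-up is that the cutoff must satisfy $1\le k_0\le n$. This fails only for $n=1$ and $n=3$, where the inequality is trivial; at $n=3$ the right-hand side is negative.
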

\begin{proof}
Since a word contributes to the tally of distinct words iff it occurs at least once, it is clear that
\beq
\e(D)&=&\sum_k\e(D_k)\nn\\
&=&\sum_k\sum_{j=1}^{2^k} I({\rm the}\ j^{\rm th}\ {\rm word}\ N_j\ {\rm of}\ {\rm length}\ k\ {\rm appears}\  {\rm at}\ {\rm least}\ {\rm once}),\nn\\
&=&\sum_k\sum_{j=1}^{2^k} \p(U_{k,j}\ge 1)\nn\\
&\ge&\sum_k\sum_{j=1}^{A(k)} \p(U_{k,j}\ge 1),\eeq where 
\begin{itemize}\item $I(B)=1$ iff $B$ occurs ($I(B)=0$ otherwise); \item We have listed the words of length $k$ in some fashion, perhaps lexicographically, and labeled the $j$th word as $N_j$;  \item The number of occurrences of $N_j$ is denoted by $U_{k,j}$; \item $A(k)$ is the number of words of length $k$ with the initial and final segments of the word being equal and of lengths $\le k/2$ (e.g., with $k=8$, a candidate word would be $SFSSFSFS$). \end{itemize}  The strategy will be to prove that
\be\cl(U_{k,j})\approx\P(\e(U_{k,j})),\ee where for any variable $T$ we denote the distribution of $T$ by $\cl(T)$, and the Poisson distribution with parameter $\l$ by $\P(\l)$.    Note that $\e(U_{k,j})=(n-k+1)/2^k=\l$ for each $j$.   Equation (3) is proved via the total variation metric, i.e., 
by showing that for each $1\le j\le A(k)$, i.e., for each word for which the equality of its beginning and tail segments is of length $\le k/2$, \be
\tv(\cl(U_{k,j}), \P(\l)):=\sup_{A\subseteq {\mathbb Z}^+}\left\vert\p(U_{k,j}\in A)-\sum_{j\in A}\frac{e^{-\l}\l^j}{j!}\right\vert\le\varepsilon_{n,k}\to0,
\ee
where $\varepsilon_{n,k}$ does not depend on the word.  It would thus follow that for each $j$,
\be \p(U_{k,j}\ge 1)\ge(1-e^{-\l})-\varepsilon_{n,k},\ee
and thus via (2) that 
\be\e(D_k)\ge A(k)\cdot((1-e^{-\l})-\varepsilon_{n,k}).
\ee
We have that 
\be U_{k,j}=\sum_{j=1}^{n-k+1} I_j,\ee
where $I_j$ is the indicator variable that equals 1 if the word in question appears in the $k$ places $\{j,j+1,\ldots, j+k-1\}$ starting at $j$.  Also, $I_j$ is independent of the ensemble of $I_\ell$'s whose windows do not intersect those of $I_j$.  Thus Corollary 2.C.5 in \cite{bhj} indicates that
\begin{eqnarray}
&&\tv(\cl(U_{k,j}), \P(\l))\le\frac{1-e^{-\l}}{\l}\times\nonumber\\
&&\lr\sum_j\p^2(I_j=1)+\sum_j\sum_{i=j-k+1}^{j+k-1}[\p(I_jI_i=1)+\p(I_j=1)\p(I_i=1])\rr.\nonumber\\
&&
\end{eqnarray}
Since $\p(I_j=1)=\frac{1}{2^k}$ for each $j$, we have that $\l=\frac{n-k+1}{2^k}$.  Using this fact and bounding $\frac{1-e^{-\l}}{\l}$ by 1, (8) reduces to
\begin{eqnarray}
\tv(\cl(U_{k,j}), \P(\l))&\le&\frac{(n-k+1)}{2^{2k}}+\sum_j\sum_{i=j-k+1}^{j+k-1}\lr\p(I_jI_i=1)+\frac{1}{2^{2k}}\rr\nonumber\\
&\le&\frac{(n-k+1)}{2^{2k}}+\frac{2(n-k+1)k}{2^{2k}}+\sum_j\sum_{i=j-k+1}^{j+k-1}\p(I_jI_i=1).\nonumber\\
\end{eqnarray}
Equations (6) and (9) thus give
\begin{eqnarray}
\e(D_k)\nonumber&\ge&A(k)(1-e^{-\l})-\varepsilon_{n,k})\nonumber\\
&\ge& A(k)\times \bigg((1-e^{-\l})-\frac{(n-k+1)}{2^{2k}}-\nonumber\\
&&\frac{2(n-k+1)k}{2^{2k}}-\sum_j\sum_{i=j-k+1}^{j+k-1}\p(I_jI_i=1).\bigg)\end{eqnarray}
As estimate of $A(k)$ would be valuable at this point.  Assuming for simplicity that $k$ is even,  and letting $B_r$ denote those $k$-words whose initial and tail segments coincide and are of length $r$, 
\beq A(k) &=& 2^k - \left\vert\bigcup_{r=(k/2)+1}^{k-1} B_r\right\vert,\nn\\
&\ge& 2^k-\sum_r\vert B_r\vert\nn\\
&\ge& 2^k-C2^{k/2}\nn\\&=&2^k\lr1-\frac{C}{2^{k/2}}\rr,\eeq
so that $A(k)$ consists of almost all the words of length $k$.   Equation (11) is true because
of the following lemma.
\begin{lem}For $(k/2)+\le r\le k-1$, we have
\[\vert B_r\vert \le 2^{k-r}.\]
\end{lem}
\begin{proof} Starting with $r=k-1$, we see that $B_{k-1}=2$ since the segments must be $SSS\ldots SS$ or $FFF\ldots FF$.  Similarly, if $r=k-2$, then we must have the even elements of the string equaling each other, and so also there must be equality of the odd elements -- for a total of 4 degrees of freedom, with the strings being all $S$'s, all $F$'s, or alternating $S$ and $F$ (with two beginning possibilities).  This pattern continues, until, when $r=1$ the overlap element must be either $S$ or $F$ with the rest of the spots being free.  This completes the proof.
\end{proof}

We deal separately with the four terms in (10). 

\n First note that
\begin{eqnarray}
A(k)(1-e^{-\l})&\ge& A(k)\frac{\l}{1+\l}\nn\\
&\ge &A(k)\l(1-\l)\nn\\
&=&(n-k+1)\frac{A(k)}{2^k}-(n-k-1)^2\frac{A(k)}{2^{2k}}\nn\\
&\ge& (n-k+1)\lr1-\frac{C}{2^{k/2}}\rr-\frac{(n-k-1)^2}{2^k}\lr1-\frac{C}{2^{k/2}}\rr\nn\\
\end{eqnarray}
We retain the $(n-k+1)$ term in (12) for a later analysis and combine the two other terms above with the second and third terms in (10) to get:
\beq\e(D_k)
&\ge&(n-k+1)-(n-k+1)o(1)-\frac{(n-k+1)^2}{2^k}\lr1-\frac{C}{2^{k/2}}\rr\nn\\&-&\frac{3n^2}{2^{k}}-A(k)\sum_j\sum_{i=j-k+1}^{j+k-1}\p(I_iI_j=1)\nn\\
&\ge &(n-k+1)-(n-k+1)o(1)-\frac{n^2}{2^{k-1}}\nn\\
&&-A(k)\sum_j\sum_{i=j-k+1}^{j+k-1}\p(I_iI_j=1)
\eeq
The quantity $n^2/2^{{k-1}}$ above approaches 0 if $k=2\lg n+A_n$, where $A_n\to\infty$. 
Finally, letting $r$ denote the magnitude of the overlap between the ``$k$-windows" of $i$ and $j$, 
\begin{eqnarray}A(k)\times\sum_{j=1}^{n-k+1}\sum_{i=j-k+1}^{j+k-1}\p(I_jI_i=1)&\le&n2^{k+1}\times\sum_{r=1}^{k/2}\p(I_1I_{k+1-r}=1)\nonumber\\
&\le&n2^{k+1}\times\lc\frac{1}{2^{3k/2}}+\ldots+\frac{1}{2^{2k}}\rc\nonumber\\
&\le&C\frac{n}{2^{k/2}}\eeq
and thus 
\be\e(D_k)\ge (n-k+1)-(n-k-1)o(1)-\frac{n^2}{2^{k/2}}.\ee
Summing (15) from $k=3\lg n$ we get
\beq\e(D)&\ge& \sum_{k\ge 3\lg n}(n-k+1)-\sqrt{n}\nn\\
&=&\sum_{t=1}^{n-3\lg n}t-\sqrt{n}\nn\\
&=&\frac{(n-3\lg n)(n-3\lg n+1)}{2}-\sqrt{n}\nn\\
&\ge &\frac{n^2}{2}\lc1-\frac{6\lg n}{n}\rc.
\eeq
This proves Theorem 2.3.
\end{proof}
    \section{Poisson Approximation and Consecutive Patterns:  The i.i.d.~Uniform Case.} In this section,  for $d\ge 3$, we consider a sequence of i.i.d.~trials $X=\{X_1, X_2, \ldots, X_n\}$ where $\p(X_i=j)={1}/{d}; j=1,2,\ldots, d$.  
    
    First, Lemmas 2.1 and 2.2 go through unchanged, with $2^k$ being replaced by $d^k$.  
    
    The expected value of $D$ can be expressed as follows:
    
    \beq
\e(D)&=&\sum_k\e(D_k)\nn\\
&=&\sum_k\sum_{j=1}^{d^k} I({\rm the}\ j^{\rm th}\ {\rm word}\ N_j\ {\rm of}\ {\rm length}\ k\ {\rm appears}\  {\rm at}\ {\rm least}\ {\rm once}),\nn\\
&=&\sum_k\sum_{j=1}^{d^k} \p(U_{k,j}\ge 1)\nn\\
&\ge&\sum_k\sum_{j=1}^{A'(k)} \p(U_{k,j}\ge 1),\eeq 
where $A'(k)$ is the number of words of length $k$ with the initial and final segments of the word being equal and of lengths $r\le k/2$ (e.g., with $k=11, r=4, d=26$, a candidate word would be ABRACADABRA.)

As in the binary case, we will prove that

\be\e(D_k)\ge A'(k)\times((1-e^{-\l})-\varepsilon_{n,k}).
\ee
We have that 
\be U_{k,j}=\sum_{j=1}^{n-k+1} I_j,\ee
where $I_j$ is the indicator variable that equals 1 if the word in question appears in the $k$ places $\{j,j+1,\ldots, j+k-1\}$ starting at $j$.  Also, $I_j$ is independent of the ensemble of $I_\ell$'s whose windows do not intersect those of $I_j$.  Thus, as in Section 2,
\begin{eqnarray}
&&\tv(\cl(U_{k,j}), \P(\l))\le\frac{1-e^{-\l}}{\l}\times\nonumber\\
&&\lr\sum_j\p^2(I_j=1)+\sum_j\sum_{i=j-k+1}^{j+k-1}[\p(I_jI_i=1)+\p(I_j=1)\p(I_i=1)]\rr.\nonumber\\
&&
\end{eqnarray}
Since $\p(I_j=1)=\frac{1}{d^k}$ for each $j$, we have that $\l=\frac{n-k+1}{d^k}$.  Equation (20) reduces to
\begin{eqnarray}
\tv(\cl(U_{k,j}), \P(\l))&\le&\frac{(n-k+1)}{d^{2k}}+\sum_j\sum_{i=j-k+1}^{j+k-1}\lr\p(I_jI_i=1)+\frac{1}{d^{2k}}\rr\nonumber\\
&\le&\frac{(n-k+1)}{d^{2k}}+\frac{2(n-k+1)k}{d^{2k}}+\sum_j\sum_{i=j-k+1}^{j+k-1}\p(I_jI_i=1).\nonumber\\
\end{eqnarray}
Equations (18) and (21) thus give
\begin{eqnarray}
\e(D_k)&\ge& A'(k)((1-e^{-\l})-\varepsilon_{n,k})\nonumber\\
&\ge& A'(k)\times(1-e^{-\l})\nn\\
&-&A'(k)\lr\frac{(n-k+1)}{d^{2k}}+\frac{2(n-k+1)k}{d^{2k}}+\sum_j\sum_{i=j-k+1}^{j+k-1}\p(I_jI_i=1)\rr.\nn\\
\end{eqnarray}
As before, we estimate $A'(k)$ at this point.  Letting $B'_r$ denote those $k$-words whose initial and tail segments coincide and are of length $r$, 
\beq A'(k) &=& d^k - \left\vert\bigcup_{r=(k/2)+1}^{k-1} B'_r\right\vert,\nn\\
&\ge& d^k-\sum_r\vert B'_r\vert\nn\\
&\ge& d^k-Cd^{k/2}\nn\\&=&d^k\lr1-\frac{C}{d^{k/2}}\rr.\eeq
The ``feeder lemma" for (23) is
\begin{lem}For $(k/2)+1\le r\le k-1$, we have
\[\vert B'_r\vert \le d^{k-r}.\]
\end{lem}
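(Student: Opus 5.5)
Write a word of length $k$ as $w=w_1w_2\cdots w_k$ over the alphabet $\{1,\dots,d\}$. The word lies in $B'_r$ when its initial and final segments of length $r$ coincide, i.e.\ $w_1\cdots w_r=w_{k-r+1}\cdots w_k$. Put $s=k-r$. The plan is to recast this condition as a period condition on $w$ and then count the words with that period.

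\textbf{Step 1: reduce to periodicity.} The condition says $w_i=w_{i+s}$ for every $1\le i\le r$, that is, for every $i$ with $1\le i\le k-s$. This is exactly the statement that $w$ has period $s$. When $r\ge (k/2)+1$ we have $s\le k/2-1<r$, so the two segments genuinely overlap. This is the binary picture from Lemma 2.4: $r=k-1$ forces a constant word, and $r=k-2$ forces alternation.

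\textbf{Step 2: show the first block determines $w$.} Induct on $i$ for $i>s$. The relation $w_i=w_{i-s}$, with $i-s\ge1$, shows that each letter beyond position $s$ is a copy of an earlier letter. Hence $w_{s+1},\dots,w_k$ are all forced by $w_1,\dots,w_s$.

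\textbf{Step 3: count.} The map $w\mapsto w_1\cdots w_s$ is therefore injective on $B'_r$. It is in fact a bijection onto all $d^{s}$ words of length $s$, because any block extends periodically to a word satisfying the condition. This gives
\[\vert B'_r\vert = d^{s}=d^{k-r}\le d^{k-r}.\]

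\textbf{Expected obstacle.} There is no real obstacle. The only delicate point is checking that the indices stay within $1,\dots,k$ when the overlap condition is rewritten as the period relation in Step 1.

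\textbf{Use in (23).} Summing over $r=(k/2)+1,\dots,k-1$ gives
\[\sum_r \vert B'_r\vert\le \sum_{s\ge1}^{k/2-1} d^{s}\le \frac{d}{d-1}\,d^{k/2}.\]
This is the bound needed in (23), with $C=d/(d-1)\le 3/2$.
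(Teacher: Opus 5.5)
Your proof is correct and follows the same idea as the paper. The paper proves this by pointing to Lemma 2.4: an overlap of length $r$ forces letters in positions congruent modulo $k-r$ to agree, which leaves exactly $k-r$ free letters. Your period-$s$ formulation, with the explicit bijection onto the first block, makes that informal ``this pattern continues'' argument rigorous, and your summation for (23) is also valid (indeed even $\sum_{s=1}^{k/2-1}d^{s}\le d^{k/2}/(d-1)$ holds).
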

\begin{proof} Exactly the same as that of Lemma 2.4, with $d$ replacing 2.  
\end{proof}

\n We deal separately with the four terms in (22):  

\begin{eqnarray}
A'(k)(1-e^{-\l})&\ge& A'(k)\frac{\l}{1+\l}\nn\\
&\ge &A'(k)\l(1-\l)\nn\\
&=&(n-k+1)\frac{A'(k)}{d^k}-(n-k-1)^2\frac{A'(k)}{d^{2k}}\nn\\
&\ge& (n-k+1)\lr1-\frac{C}{d^{k/2}}\rr-\frac{(n-k-1)^2}{d^k}\lr1-\frac{C}{d^{k/2}}\rr\nn\\
&\ge&(n-k+1)-(n-k+1)o(1)-\frac{(n-k-1)^2}{d^k}\lr1-\frac{C}{d^{k/2}}\rr.\nn\\ \eeq
The second and third terms in (22) combine to give
\be A'(k)\frac{(n-k+1)}{d^{2k}}+A'(k)\frac{2k(n-k+1)}{d^{2k}}\le\frac{3n^2A'(k)}{d^{2k}}\le\frac{3n^2}{d^k}.\ee
Thus
\beq
A'(k)\sum_j\sum_i\p(I_jI_i=1)\le nd^k.
\eeq
\beq\e(D_k)
&\ge&(n-k+1)-(n-k+1)o(1)-\frac{(n-k+1)^2}{d^k}\lr1-\frac{C}{d^{k/2}}\rr\nn\\&-&\frac{3n^2}{d^{k}}-A'(k)\sum_j\sum_{i=j-k+1}^{j+k-1}\p(I_iI_j=1)\nn\\
&\ge &(n-k+1)-(n-k+1)o(1)-\frac{n^2}{d^{k-1}}\nn\\
&&-A'(k)\sum_j\sum_{i=j-k+1}^{j+k-1}\p(I_iI_j=1).
\eeq
The quantity $n^2/d^{k-1}$ tends to zero if (say) $k\ge 2\lg n/\lg d+A_n$.  Finally Equation (22) gives
\beq A'(k)\sum_j\sum_{i=j-k+1}^{j+k-1}\p(I_iI_j=1)&\le&nd^{k+1}\sum_{r=1}^{k/2}\p(I_iI_{k+r-1}=1)\nn\\
&\le& nd^{k+1}\lc\frac{1}{d^{3k/2}}+\ldots \frac{1}{d^{2k}}\rc\nn\\
&\le& \frac{Cn}{d^{k/2}},\eeq
and thus 
\be\e(D_k)\ge (n-k+1)-\frac{n^2}{d^{k/2}}.\ee Summing (29) from $k=2\lg n/\lg d$ onwards, we get the following generalization of Theorem 2.3.
\beq\e(D)&\ge& \sum_{k\ge (2\lg n/\lg d}(n-k+1)-\sqrt{n}\nn\\
&=&\sum_{t=1}^{n-(2\lg n)/\lg d}t-\sqrt{n}\nn\\
&=&\frac{(n-B)(n-B+1)}{2}-\sqrt{n}\nn\\
&=&\frac{n^2}{2}\lc1-\frac{4\lg n}{n\lg d}\rc
\eeq
(where $B=2\lg n/\lg d$).
    \section{Open Questions}  
    
\indent\indent(a) Can we gain a deeper insight into the concentration of $D$ around $\e(D)$ by estimating the variance of $D$? 

\section{Funding}  This post-retirement research was not funded by any external funding agency. 

\section {Ethics Declarations}. There were no human subjects used. No data was used either.  Hence there was no need for IRB approval from the ETSU IRB Committee, https://www.etsu.edu/irb/  This is true for all the sections of this paper.

\end{document}